\tikzstyle{vertex}=[circle, draw, inner sep=0pt, minimum size=6pt]
\newcommand{\vertex}{\node[vertex]}
\newtheorem{theorem}{Theorem}[section]
\patchcmd{\ttlh@hang}{\parindent\z@}{\parindent\z@\leavevmode}{}{}
\patchcmd{\ttlh@hang}{\noindent}{}{}{}
\titleformat*{\section}{\large\bfseries}
\titleformat*{\subsection}{\small\bfseries}
\titleformat*{\subsubsection}{\small\bfseries}
\titleformat*{\paragraph}{\small\bfseries}
\titleformat*{\subparagraph}{\small\bfseries}
\newcommand{\N}{\mathbb{N}}
\newcommand{\Z}{\mathbb{Z}}
\newcommand{\E}{\mathbb{E}}
\newcommand{\Ec}{\mathbb{E}_{\beta_c}}
\newcommand{\p}{\mathbb{P}}
\newcommand{\pc}{\mathbb{P}_{\beta_c}}
\newcommand{\Km}{K_{\text{max}}}
\newcommand{\eps}{\varepsilon}
\newtheorem{lemma}[theorem]{Lemma}
\newtheorem{proposition}[theorem]{Proposition}
\begin{document}

	\title{Isoperimetric lower bounds for critical exponents for long-range percolation}

	\author{Johannes B\"aumler\footnote{ \textsc{Department of Mathematics, TU Munich, Germany}. E-Mail: \href{mailto:johannes.baeumler@tum.de}{johannes.baeumler@tum.de}} \ and Noam Berger\footnote{ \textsc{Department of Mathematics, TU Munich, Germany}. E-Mail: \href{mailto:noam.berger@tum.de}{noam.berger@tum.de} 
	}
	}
	
	\maketitle
	
	\begin{center}
		\parbox{13cm}{ \textbf{Abstract.} We study independent long-range percolation on $\mathbb{Z}^d$ where the vertices $x$ and $y$ are connected with probability $1-e^{-\beta\|x-y\|^{-d-\alpha}}$ for $\alpha > 0$. Provided the critical exponents $\delta$ and $2-\eta$ defined by $\delta = \lim_{n\to \infty} \frac{-\log(n)}{\log\left(\mathbb{P}_{\beta_c}\left(|K_0|\geq n\right)\right)}$  
			and $2-\eta = \lim_{x \to \infty} \frac{\log\left(\mathbb{P}_{\beta_c}\left(0\leftrightarrow x\right)\right)}{\log(\|x\|)} + d$ exist, where $K_0$ is the cluster containing the origin, we show that
			\begin{equation*}
			\delta \geq \frac{d+(\alpha\wedge 1)}{d-(\alpha\wedge 1)} \ \text{ and } \ 2-\eta \geq \alpha \wedge 1 \text.
			\end{equation*}
			The lower bound on $\delta$ is believed to be sharp for $d = 1, \alpha \in \left[\frac{1}{3},1\right)$ and for $d = 2, \alpha \in \left[\frac{2}{3},1\right]$, whereas the lower bound on $2-\eta$ is sharp for $d=1, \alpha \in (0,1)$, and for $\alpha \in \left(0,1\right]$ for $d>1$, and is not believed to be sharp otherwise.
			Our main tool is a connection between the critical exponents and the isoperimetry of cubes inside $\mathbb{Z}^d$.
	}
	\end{center}
	
	\vspace{0.1cm}
	
	\hypersetup{linkcolor=black}
	\tableofcontents
	\hypersetup{linkcolor=blue}

\section{Introduction}
\let\thefootnote\relax\footnotetext{{\sl MSC Class}: 60K35, 82B27, 82B43}
\let\thefootnote\relax\footnotetext{{\sl Keywords}: Long-range percolation, phase transition, critical exponents}

Consider Bernoulli bond percolation on $\Z^d$ where we include an edge between the vertices $x,y\in \Z^d$ with probability $1-e^{-\beta J(x,y)}$ and independent of all other edges. The function $J:\Z^d\times \Z^d \rightarrow \left[0,\infty\right)$ is a kernel that is symmetric, i.e., $J(x,y)=J(y,x)$ for all $x,y\in \Z^d$. We denote the resulting probability measure by $\p_\beta$ and its expectation by $\E_\beta$. Edges that are included are also referred to as open. We are interested in the case where the kernel is also translation invariant and integrable, meaning that $J(x,y)=J(0,y-x)$ for all $x,y\in \Z^d$ and $\sum_{x\in \Z^d} J(0,x)< \infty$. The integrability condition guarantees that the resulting graph is almost surely locally finite. This procedure creates certain clusters, which are the connected components in the resulting random graph. Write $K_x$ for the cluster containing the vertex $x\in \Z^d$. A major question in percolation theory is the emergence of infinite clusters, for which we define the critical parameter $\beta_c$ by
\begin{equation*}
\beta_{c} = \inf \left\{\beta \geq 0 : \p_\beta \left( |K_0| = \infty \right) > 0 \right\} \text.
\end{equation*}
A comparison with a Galton-Watson tree shows that there are no infinite clusters for $\beta < \left(\sum_{x\in \Z^d} J(0,x)\right)^{-1}$, which shows $\beta_{c}>0$. For $d > 1$ and $J \neq 0$ it is well-known that $\beta_c < \infty$, whereas for $d=1$ it is known that $\beta_c < \infty$ in the case where $J(x,y) \simeq \|x-y\|^{-1-\alpha}$ for $\alpha \leq 1$ \cite{newman1986one,duminil2020long}, whereas $\beta_c=\infty$ for $\alpha>1$. Long-range percolation mostly deals with the case where $J(x,y) \simeq \|x-y\|^{-d-\alpha}$ for some $\alpha >0$, where we write $J(x,y)\simeq \|x-y\|^{-d-\alpha}$ if the ratio between them satisfies $\eps < \frac{J(x,y)}{\|x-y\|^{-d-\alpha}} < \eps^{-1}$ for a small enough $\eps>0$ and $\|x-y\|$ large enough.  In general it is expected that for $\alpha > d$ the resulting graph looks similar to nearest-neighbor percolation, is very well connected for $\alpha <d$, and shows a self-similar behavior for $\alpha = d$. See \cite{baeumler2022behavior,baeumler2022distances,benjamini2001diameter,berger2004lower,biskup2004scaling,biskup2011graph, ding2013distances} for results pointing in this direction. 
From the definition of $\beta_c$ and the standard Harris coupling \cite{heydenreich2017progress} we see that $ \p_\beta \left( |K_0| = \infty \right) > 0$ for $\beta > \beta_c$ and $ \p_\beta \left( |K_0| = \infty \right) = 0$ for $\beta < \beta_c$, but it is not clear what happens at $\beta = \beta_c$. For $J(x,y) \simeq \|x-y\|^{-d-\alpha}$ with $\alpha \in (0,d)$ and all $d \in \N_{>0}$ the second author showed that $ \pc \left( |K_0| = \infty \right) = 0$ \cite[Theorem 1.5]{berger2002transience}, whereas for $d=1$ and $J(x,y)\simeq \|x-y\|^{-2}$ it is a result by Aizenman and Newman that $ \pc \left( |K_0| = \infty \right) > 0$ \cite{aizenman1986discontinuity}. For $d\geq 2$ and $\alpha \geq d$ it is also expected that  $\pc \left( |K_0| = \infty \right) = 0$, but there is no full proof known at the moment. Whenever there is no infinite cluster at the critical value, it is a central question how fast the tail of the cluster at criticality $\pc \left(|K_0|\geq n\right)$ and the two-point function $\pc \left(0 \leftrightarrow x\right)$ tend to $0$ as $n$, respectively $\|x\|$, grow. Here we write $x\leftrightarrow y$ if there exists an open path from $x$ to $y$.
It is conjectured that
\begin{align}\label{eq:clustersize}
&\pc \left(|K_0|\geq n\right) \approx n^{-1/\delta}   \ \ \ \ \ \ \ \ \ \ \ \ \ \ \text{ as } n\to \infty,\\
\label{eq:twopointfct}
&\pc \left(0 \leftrightarrow x\right) \approx \|x\|^{-d+2-\eta}  \ \ \ \ \ \ \ \ \ \ \text{ as } \|x\| \to \infty
\end{align}
for certain numbers $\eta,\delta$ depending on $d$ and $\alpha$, but not on the precise details of the kernel $J$. Here, we write $f(n)\approx n^c$ if $f(n)=n^{c+o(1)}$. Even the existence of the exponents is not clear and it is still open, whether the limits $\lim_{n\to \infty}\frac{\log \left(\pc \left(|K_0|\geq n\right)\right)}{\log(n)}$ and $\lim_{\|x\|\to \infty}\frac{\log \left(\pc \left(0 \leftrightarrow x\right)\right)}{\log(\|x\|)}$ exist. The widely accepted conjecture is that they exist. This has been for example proven for other models of percolation like two-dimensional percolation on the triangular lattice \cite{lawler2002one,smirnov2001,smirnov2001critical} or percolation for high enough dimension $d$, or for small enough $\alpha$ \cite{heydenreich2008mean}. Recently, Hutchcroft proved the upper bounds $\delta \leq \frac{2d}{d-\alpha}$ and $2-\eta \leq \alpha$ \cite{hutchcroft2022sharp}, improving his previous result $\delta \leq \frac{2d+\alpha}{d-\alpha}$ \cite{hutchcroft2021power} which is, to our knowledge, the first rigorous proof of a power-law decay of $\pc \left(|K_0|\geq n\right)$ for long-range percolation.

\paragraph*{Our results} In this paper, we give lower bounds on the exponents $\delta$ and $2-\eta$. We will always assume an upper bound on the kernel $J$  of the form $J(x,y)\leq C_1 \|x-y\|^{-d-\alpha}$ for some constant $C_1< \infty$.

\begin{theorem}\label{theo:clustersize}
	Let $\alpha \in (0,1)$ for $d=1$, respectively $\alpha > 0$ for $d>1$. Suppose that $J(x,y) \leq C_1 \|x-y\|^{-d-\alpha}$
	and the exponent $\delta$ defined in \eqref{eq:clustersize} exists. Then
	\begin{align*}
	\delta \geq \frac{d+ (\alpha \wedge 1)}{d-(\alpha \wedge 1)}\text.
	\end{align*}
\end{theorem}
\Cref{theo:clustersize} is an immediate consequence of \Cref{propo:cluster decay}. It is only of interest in dimension $d \in \{1,2\}$ and for $\alpha > \frac{d}{3}$, as it is known in wider generality that $\delta \geq 2$ \cite[Proposition 10.29]{grimmett1999percolation,aizenman1987sharpness}.
For the case where $d=1$ and $\alpha \in \left[\frac{1}{3},1\right)$, respectively where $d=2$ and $\alpha \in \left[\frac{2}{3},1\right]$, our lower bound coincides with the conjectured true value of $\delta$. \\
In particular, \Cref{theo:clustersize} shows that for $d \in \{1,2\}$ and $\alpha > \frac d3$ the model does not exhibit the so called 'mean-field behavior'. The notion of 'mean-field  behavior' is a notion that comes from physics, and roughly means that all the critical exponents are the same as in models of infinite dimension, such as Erd\"os-R\'enyi graphs (in the $n\to\infty$ limit) or the binary tree. There are several ways of precisely defining this notion, but applied to our case all of them imply, among other things, that the exponents $\delta$ and $2-\eta$ exist and take the values $\delta = 2$ and $2-\eta = 2 \wedge \alpha$. In a major breakthrough by Hara and Slade \cite{hara1990mean} mean-field behavior was established for high dimensional nearest-neighbour percolation. It was later also established for long-range percolation with $d > 6$ or $\alpha < \frac d3$ \cite{heydenreich2008mean}. The lower bounds in \Cref{theo:clustersize} rule out the mean-field behavior for $d \in \{1,2\}$ and $\alpha > \frac d3$, as they imply that $\delta>2$ in this regime.

\begin{theorem}\label{theo:twopointfct}
	Let $\alpha \in (0,1)$ for $d=1$, respectively $\alpha > 0$ for $d>1$. Suppose that $J(x,y) \leq C_1 \|x-y\|^{-d-\alpha}$
	and the exponent $2-\eta$ defined in \eqref{eq:twopointfct} exists. Then
	\begin{align*}
	2-\eta \geq \alpha\wedge 1\text.
	\end{align*}
\end{theorem}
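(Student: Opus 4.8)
\emph{Proof proposal.} The plan is to derive a contradiction from the assumption $2-\eta<\alpha\wedge1$ using the characterization of the critical point that underlies the sharpness of the phase transition (Duminil-Copin--Tassion). For a finite set $S\ni0$ write
\[
\varphi_\beta(S):=\sum_{x\in S}\sum_{y\notin S}\bigl(1-e^{-\beta J(x,y)}\bigr)\,\p_\beta\bigl(0\leftrightarrow x\text{ in }S\bigr),
\]
where ``$0\leftrightarrow x$ in $S$'' means connected using only edges with both endpoints in $S$. Since $\beta_c$ is the percolation threshold, one has $\varphi_{\beta_c}(S)\ge1$ for \emph{every} finite $S\ni0$; in particular, for the cube $\Lambda_n:=\{-n,\dots,n\}^d$ and every $n\ge1$ we have $\varphi_{\beta_c}(\Lambda_n)\ge1$, and this is the ``isoperimetry of cubes'' that the argument exploits. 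Now suppose toward a contradiction that $2-\eta<\alpha\wedge1$. Fix $\varepsilon\in(0,\alpha\wedge1)$ with $2-\eta<(\alpha\wedge1)-\varepsilon$ and set $\beta:=d-(\alpha\wedge1)+\varepsilon$, so that $0<\beta<d$ and $d-\beta=(\alpha\wedge1)-\varepsilon>0$. Since the exponent in \eqref{eq:twopointfct} exists, $\pc(0\leftrightarrow x)=\|x\|^{(2-\eta)-d+o(1)}$, whence there is a constant $C_4<\infty$ with $\pc(0\leftrightarrow x)\le C_4\|x\|^{-\beta}$ for all $x\neq0$ (the finitely many $x$ below the threshold at which the $o(1)$ takes effect are absorbed into $C_4$).

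The core of the argument is then to show $\varphi_{\beta_c}(\Lambda_n)\to0$ as $n\to\infty$. Using $1-e^{-t}\le t$, the monotonicity $\p_{\beta_c}(0\leftrightarrow x\text{ in }\Lambda_n)\le\pc(0\leftrightarrow x)$, and $J(x,y)\le C_1\|x-y\|^{-d-\alpha}$, one obtains
\begin{equation*}
\varphi_{\beta_c}(\Lambda_n)\ \le\ \beta_c C_1\sum_{x\in\Lambda_n}\pc(0\leftrightarrow x)\,h_n(x),\qquad h_n(x):=\sum_{y\notin\Lambda_n}\|x-y\|^{-d-\alpha}\,.
\end{equation*}
Writing $r_n(x)$ for the $\ell^\infty$-distance from $x$ to $\Z^d\setminus\Lambda_n$, every $y\notin\Lambda_n$ obeys $\|x-y\|\ge r_n(x)$, so summing the tail and using $\alpha>0$ gives $h_n(x)\le C_3\,r_n(x)^{-\alpha}$. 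I would split $\Lambda_n$ into a \emph{bulk} $\{x:r_n(x)>n/2\}$, on which $r_n(x)\simeq n$ and $\|x\|\le Cn$, and a \emph{boundary layer} $\{x:r_n(x)\le n/2\}$, on which $\|x\|\simeq n$ and the number of vertices with $r_n(x)=k$ is at most $Cn^{d-1}$. Inserting $\pc(0\leftrightarrow x)\le C_4\|x\|^{-\beta}$ and using $\beta<d$, the bulk contributes $\lesssim n^{-\alpha}\sum_{0<\|x\|\le Cn}\|x\|^{-\beta}\simeq n^{-\alpha}n^{\,d-\beta}=n^{(\alpha\wedge1)-\alpha-\varepsilon}\to0$, while the boundary layer contributes $\lesssim n^{-\beta}n^{d-1}\sum_{k=1}^{n}k^{-\alpha}$, i.e.\ $n^{\,d-1-\beta}$ times a factor of order $n^{1-\alpha}$, $\log n$, or $1$ according to whether $\alpha<1$, $\alpha=1$, or $\alpha>1$; substituting $\beta=d-(\alpha\wedge1)+\varepsilon$ this is $n^{-\varepsilon+o(1)}\to0$ in all three cases.

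Combining the two estimates yields $\varphi_{\beta_c}(\Lambda_n)\to0$, hence $\varphi_{\beta_c}(\Lambda_n)<1$ for large $n$, contradicting $\varphi_{\beta_c}(\Lambda_n)\ge1$; therefore $2-\eta\ge\alpha\wedge1$. I expect the only genuinely delicate point to be the boundary-layer estimate: it is precisely there that the exponent $\alpha\wedge1$ rather than $\alpha$ emerges, through the trichotomy $\sum_{k\le n}k^{-\alpha}\simeq n^{1-\alpha}$ versus $\log n$ versus $O(1)$, and one must verify in each regime that the $\simeq n^{d-1}$ vertices in each shell adjacent to $\partial\Lambda_n$, weighted by $\pc(0\leftrightarrow x)\simeq n^{-\beta}$ and by $r_n(x)^{-\alpha}$, still contribute only $o(1)$. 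Everything else — the bound $\varphi_{\beta_c}(\Lambda_n)\ge1$ from sharpness, the elementary comparison $h_n(x)\le C_3 r_n(x)^{-\alpha}$, and the shell count near $\partial\Lambda_n$ — is the quantitative substance of the advertised link between the critical exponents and the isoperimetry of cubes.
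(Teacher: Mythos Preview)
Your argument is correct. The only cosmetic issue is the notational clash: you use $\beta$ for the exponent $d-(\alpha\wedge1)+\varepsilon$, while the paper reserves $\beta$ for the percolation parameter; rename it to avoid confusion.

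Your route differs from the paper's in how it handles the boundary of $\Lambda_n$. The paper does not fix a single box and split into bulk/boundary. Instead, in Lemma~\ref{lem:cluster outgoing} it \emph{averages} $\phi_{\beta_c}(\Lambda_k)$ over $k\in\{1,\dots,n\}$: for fixed $a\in\Lambda_n$, the average $\tfrac{1}{n}\sum_{k\ge\|a\|_\infty}\sum_{b\notin\Lambda_k}\|a-b\|^{-d-\alpha}$ collapses to $\tfrac{1}{n}\sum_{k\ge1}(k)^{-\alpha}$, which is \emph{uniformly} bounded by $Cf(n,\alpha)$ regardless of how close $a$ is to $\partial\Lambda_n$. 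This yields the unconditional lower bound $\Ec[|K_0(\Lambda_n)|]\ge C_3^{-1}f(n,\alpha)^{-1}$ of Proposition~\ref{propo:two point decay}, which does not need the exponent $2-\eta$ to exist. Your bulk/boundary split instead exploits the hypothesized decay $\pc(0\leftrightarrow x)\lesssim\|x\|^{-\beta}$ in the boundary layer (where $\|x\|\simeq n$) to kill that contribution; this is more direct once the hypothesis is granted, but it cannot yield the paper's unconditional averaged lower bound. In short: the paper's averaging trick trades a fixed-box estimate for a stronger statement, while your decomposition is a clean shortcut to the theorem as stated.
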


\begin{figure}\label{fig:d=1}
	\begin{center}
		\begin{subfigure}[t]{7cm}
			\begin{tikzpicture}[xscale=3.5, yscale=3.75]
			\draw[->,thick, label=ab] (0,0) -- coordinate (x axis mid) (1.2,0);
			\draw[->,thick] (0,0) -- coordinate (y axis mid) (0,1.2);
			
			\draw[scale=1, domain=0:1, smooth, variable=\x, purple, very thick] plot ({\x}, {\x});

			\vertex[draw=none, label = $\alpha$ ] () at (1.15,-0.16) {};
			\vertex[draw=none, label = $2-\eta$ ] () at (-0.2,1) {};

			\foreach \x in {0,0.5,1}
			\draw (\x,1pt) -- (\x,-1pt)
			node[anchor=north] {\x};
			
			\foreach \y in {0,0.5,1}
			\draw (1pt,\y) -- (-1pt,\y) 
			node[anchor=east] {\y};
			\end{tikzpicture}
		\end{subfigure}
		\begin{subfigure}[t]{7cm}
			\begin{tikzpicture}[xscale=3.5, yscale=0.18]
			\draw[->,thick, label=ab] (0,0) -- coordinate (x axis mid) (1.2,0);
			\draw[->,thick] (0,0) -- coordinate (y axis mid) (0,25);
			
			\draw[scale=1, domain=0:1/3, smooth, variable=\x, blue, very thick] plot ({\x}, {(1+\x)/(1-\x)});
			
			\draw[scale=1, domain=0:0.92, smooth, variable=\x, yellow, very thick] plot ({\x}, {(2)/(1-\x)});
			
			\draw[scale=1, domain=0:1/3, smooth, variable=\x, red, very thick] plot ({\x}, {2});
			\draw[scale=1, domain=1/3:0.92, smooth, variable=\x, purple, very thick] plot ({\x}, {(1+\x)/(1-\x)});
			
			\vertex[draw=none, label = $\alpha$ ] () at (1.15,-3.5) {};
			\vertex[draw=none, label = $\delta$ ] () at (-0.1,22) {};

			\foreach \x in {0,0.5,1}
			\draw (\x,20pt) -- (\x,-20pt)
			node[anchor=north] {\x};
			
			\foreach \y in {0,10,20}
			\draw (1pt,\y) -- (-1pt,\y) 
			node[anchor=east] {\y};
			\end{tikzpicture}
		\end{subfigure}
		\parbox{14cm}{ \caption{ The critical exponents $2-\eta$ and $\delta$ for $d=1$. On the left: The purple line is the conjectured true value, our lower bound, and the upper bound proven in \cite{hutchcroft2022sharp}. On the right: The yellow curve is the upper bound on $\delta$ shown in \cite{hutchcroft2022sharp}, the red curve is the conjectured true value of $\delta$, and the blue curve is our lower bound. The part where the lower bound and the conjectured true value agree $\left(\alpha \in \left[\frac{1}{3},1\right)\right)$ is purple.}}
	\end{center}
\end{figure}
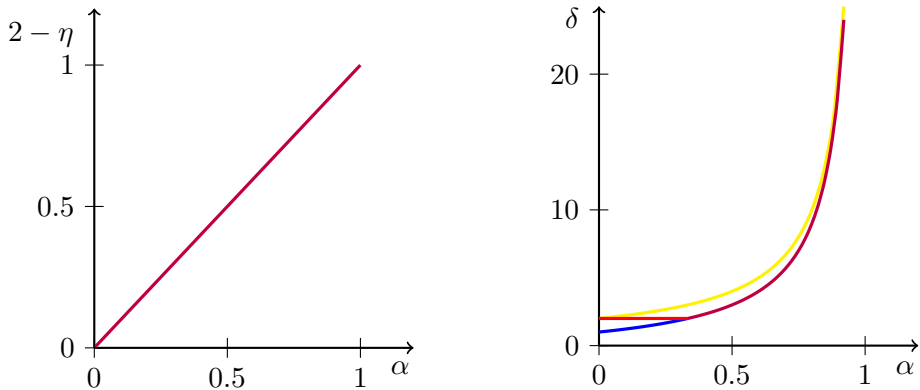

A graphical representation of our results, previously known results, and the conjectured behavior can be found in Figure 1 for dimension $d=1$ and in Figure 2 for dimension $d=2$ below.
Theorem \ref{theo:twopointfct} is an immediate consequence of \Cref{propo:two point decay}. In the case where $J(x,y) \simeq \|x-y\|^{-d-\alpha}$, \Cref{theo:twopointfct} shows together with Hutchcroft's result \cite{hutchcroft2022sharp} that $2-\eta = \alpha$ for $\alpha \leq 1$, respectively $\alpha <1$ for $d=1$, provided the exponent $2-\eta$ defined in \eqref{eq:twopointfct} exists. This also gives a partial solution to \cite[Problem 4.1]{hutchcroft2022sharp}, which asks for conditions under which the upper bound $2-\eta \leq \alpha$ has a matching lower bound. Provided that the conjectured picture described in \eqref{eq:crossover} below holds, our proof also shows that the {\sl crossover value} $\alpha_c(d)$ defined in \eqref{eq:crossover} below satisfies $\alpha_c(d)\geq 1$ for all dimensions $d\geq 2$.
We could alternatively define the exponent $2-\eta$ by $\sum_{x \in \Lambda_{n}} \pc \left( 0 \leftrightarrow x \right) \approx n^{2-\eta}$. For $\alpha <1$ the results of \cite{hutchcroft2022sharp} together with \Cref{propo:two point decay} show that the exponent $2-\eta$ defined like this exists and equals $\alpha$. See also the discussion after \Cref{propo:two point decay} for more details.\\

\noindent
Our proofs only assume an upper bound on the kernel $J$, so in particular the results are still valid for nearest-neighbor percolation. However, the bound $2-\eta \geq 1$ observed in this situation already follows from the proof of sharpness of the phase transition of Duminil-Copin and Tassion \eqref{eq:duminilcopin tassion}, and the lower bound $\delta \geq 3$ observed for $d=2$ follows from $2-\eta \geq 1$ and the hyperscaling inequality $(2-\eta)(\delta+1)\leq d (\delta-1)$ proven by Hutchcroft \cite{hutchcroft2021power}. This hyperscaling inequality can be rearranged to $\delta \geq \frac{d+2-\eta}{d-(2-\eta)}$ and using $d=2, 2-\eta \geq 1$ shows $\delta \geq 3$. But our proof still shows $\delta \geq 3$ without this machinery and without assuming the existence of the exponent $2-\eta$.
Our main tool for the proofs of \Cref{theo:clustersize} and \Cref{theo:twopointfct} (respectively \Cref{propo:cluster decay} and \Cref{propo:two point decay}) is a connection between the critical exponents and the isoperimetry of the boxes $\Lambda_n = \{-n,\ldots,n\}^d$ in \cref{sec:isoper}.

\begin{figure}\label{fig:d=2}
	\begin{center}
		\begin{subfigure}[t]{7cm}
			\begin{tikzpicture}[xscale=1.5, yscale=2]
			\draw[->,thick, label=ab] (0,0) -- coordinate (x axis mid) (3.2,0);
			\draw[->,thick] (0,0) -- coordinate (y axis mid) (0,2.2);
			
			\draw[scale=1, domain=0:1, smooth, variable=\x, purple, very thick] plot ({\x}, {\x});
			\draw[scale=1, domain=1:2.9, smooth, variable=\x, blue, very thick] plot ({\x}, {1});
			\draw[scale=1, domain=1:43/24, smooth, variable=\x, orange, very thick] plot ({\x}, {\x});
			\draw[scale=1, domain=43/24:2, smooth, variable=\x, yellow, very thick] plot ({\x}, {\x});
			\draw[scale=1, domain=43/24:2.9, smooth, variable=\x, red, very thick] plot ({\x}, {43/24});

			\vertex[draw=none, label = $\alpha$ ] () at (3.25,-0.3) {};
			\vertex[draw=none, label = $2-\eta$ ] () at (-0.4,2.1) {};

			\foreach \x in {0,1,2,3}
			\draw (\x,2pt) -- (\x,-2pt)
			node[anchor=north] {\x};
			
			\foreach \y in {0,1,2}
			\draw (2pt,\y) -- (-2pt,\y) 
			node[anchor=east] {\y};
			\end{tikzpicture}
		\end{subfigure}
		\begin{subfigure}[t]{7cm}
			\begin{tikzpicture}[xscale=1.2, yscale=0.18]
			\draw[->,thick, label=ab] (0,0) -- coordinate (x axis mid) (3.5,0);
			\draw[->,thick] (0,0) -- coordinate (y axis mid) (0,25);
			
			\draw[scale=1, domain=0:2/3, smooth, variable=\x, blue, very thick] plot ({\x}, {(2+\x)/(2-\x)});
			\draw[scale=1, domain=1:3.3, smooth, variable=\x, blue, very thick] plot ({\x}, {3});
			
			\draw[scale=1, domain=0:1.84, smooth, variable=\x, yellow, very thick] plot ({\x}, {(4)/(2-\x)});
			
			\draw[scale=1, domain=0:2/3, smooth, variable=\x, red, very thick] plot ({\x}, {2});
			\draw[scale=1, domain=1:43/24, smooth, variable=\x, red, very thick] plot ({\x}, {(2+\x)/(2-\x)});
			\draw[scale=1, domain=43/24:3.2, smooth, variable=\x, red, very thick] plot ({\x}, {91/5});
			\draw[scale=1, domain=2/3:1, smooth, variable=\x, purple, very thick] plot ({\x}, {(2+\x)/(2-\x)});
			
			\vertex[draw=none, label = $\alpha$ ] () at (3.35,-3.5) {};
			\vertex[draw=none, label = $\delta$ ] () at (-0.2,22) {};

			\foreach \x in {0,1,2,3}
			\draw (\x,20pt) -- (\x,-20pt)
			node[anchor=north] {\x};

			\foreach \y in {0,10,20}
			\draw (3pt,\y) -- (-3pt,\y) 
			node[anchor=east] {\y};
			\end{tikzpicture}
		\end{subfigure}
		\parbox{14cm}{\caption{ The critical exponents $2-\eta$ and $\delta$ for $d=2$. On the left: The blue line is our lower bound, the yellow line is the upper bound proven in \cite{hutchcroft2022sharp}, and the red line is the conjectured true value. The part where all three of them agree $\left(\alpha \in \left(0,1\right]\right)$ is purple and the part where the upper bound and the conjectured true value agree $\left(\alpha \in \left( 1, \frac{43}{24} \right]\right)$ is orange.
				On the right: The yellow curve is the upper bound on $\delta$ shown in \cite{hutchcroft2022sharp}, the red curve is the conjectured true value of $\delta$, and the blue curve is our lower bound. The part where the lower bound and the conjectured true value agree $\left(\alpha \in \left[\frac{2}{3},1\right]\right)$ is purple.}}
	\end{center}
\end{figure}
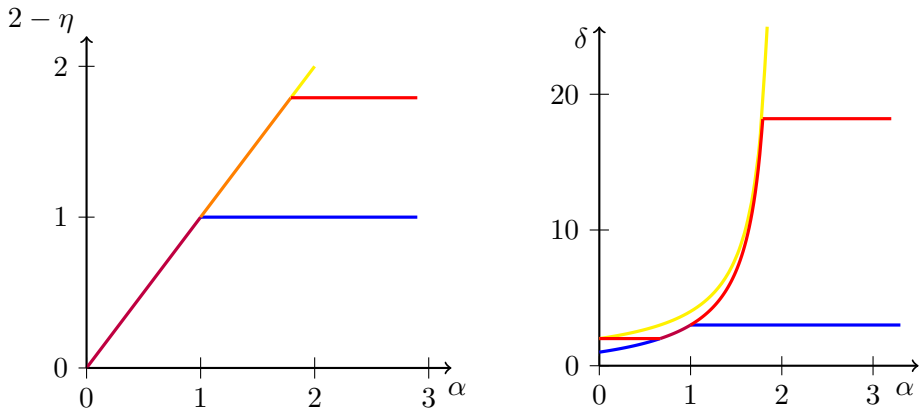

\paragraph*{Related work}

The critical behavior of percolating systems is typically a difficult problem. 
There has been considerable progress on the understanding of percolation on various graphs at and near criticality over the last years, see for example \cite{damron2017chemical,duminil2016absence,duminil2016new,duminil2017new,hermon2021no,hermon2021supercritical,hutchcroft2016critical,hutchcroft2020locality,hutchcroft2020new,hutchcroft2021critical,hutchcroft2021power,hutchcroft2022sharp,nachmias2008critical, drewitz2021critical}.
The physics prediction for the critical exponent $2-\eta$ is given by
\begin{align*}
2-\eta(d,\alpha) = \begin{cases}
\alpha & \text{ for }\alpha \leq 2-\eta_{\text{SR}}(d)\\
2-\eta_{\text{SR}} &  \text{ for }\alpha > 2-\eta_{\text{SR}}(d)
\end{cases}
\end{align*}
where $2-\eta_{\text{SR}}(d)$ is the corresponding exponent for short-range percolation on $\Z^d$. The prediction for the exponent $\delta$ is given by
\begin{align}\label{eq:crossover}
\delta(d,\alpha) = \begin{cases}
2 & \text{ for }\alpha \leq \frac{d}{3}\\
\frac{d+\alpha}{d-\alpha} &  \text{ for }\alpha \in \left[ \frac{d}{3}, \alpha_c (d) \right]\\
\delta_{\text{SR}}(d) & \text{ for } \alpha \geq \alpha_c(d)
\end{cases}
\end{align}
where $\delta_{\text{SR}}(d)$ is the corresponding exponent for short-range percolation and $\delta_{\text{SR}}(d)$ and the {\sl crossover value} $\alpha_c(d)$ are such that the function $\delta(d,\alpha)$ is continuous in $\alpha$. See also \cite[section 1.3]{hutchcroft2021power} or \cite[section 9 and 10]{grimmett1999percolation} for a broader overview of these predictions and references to the physics literature. The critical exponents are typically better understood in high dimension or for $\alpha < \frac{d}{3}$, where the triangle condition holds and methods involving the lace expansion can be used \cite{barsky1991percolation,borgs2005random,chen2015critical,hara1990mean,heydenreich2008mean}. Also for dimension $d=2$, and in particular for the triangular lattice, the situation is much better understood, due to works of Kesten, Smirnov and Werner \cite{smirnov2001,lawler2002one,smirnov2001critical,kesten1987scaling}. Here one knows that $\delta_{\text{SR}}(2)= \frac{91}{5}$. This also explains the conjectured pictures in Figure 2 and shows that the crossover value $\alpha_c(2)$ is expected to be $\frac{43}{24}$. Also for the hierarchical lattice the phase transition is better understood, due to recent results of Hutchcroft \cite{hutchcroft2021critical}. The lower bound $\delta \geq \frac{d+\alpha}{d-\alpha}$ proven for the hierarchical lattice is similar to our lower bound for $d=1$ and also shows absence of mean-field behavior for $\alpha > \frac{d}{3}$ on the hierarchical lattice.\\

\noindent
\textbf{Acknowledgements}
This work is supported by TopMath, the graduate program of the Elite Network of Bavaria and the graduate center of TUM Graduate School. We thank an anonymous referee for useful comments.

\section{Proofs}

Before going to the proofs, we want to introduce a theorem
that deals with the universal tightness of the maximum open cluster inside a random graph. It is a subset of \cite[Theorem 2.2]{hutchcroft2021power}, which turned out to be extremely useful in various models of random graphs. We write $\left|\Km(\Lambda)\right|$ for the cardinality of the largest open cluster in $\Lambda$. Note that $\Km(\Lambda)$ is in general not well-defined as a subset of $\Lambda$, since there can be distinct clusters with the same cardinality. But this will not cause any problems in the following. We define the typical value of $\left|\Km(\Lambda)\right|$ by
\begin{equation}\label{eq:typical value}
M_\beta (\Lambda) = \min \left\{ n \geq  0 : \p_\beta \left(|\Km(\Lambda)| \geq n \right) \leq e^{-1} \right\} \text.
\end{equation}
The theorem deals with general {\sl weighted graphs} $G = (V, E, J)$, where $J:E\to \left[0,\infty\right)$ is a function that gives weights to the edges. Now edges are open or closed independent of each other and an edge $e \in E$ is open with probability $1-e^{-\beta J(e)}$, where $\beta \geq 0$ is a parameter. In particular, long-range percolation on the integer lattice can be modelled as a weighted random graph with the weight function $J(\{x,y\}) = J(x-y)$.

\begin{theorem}[Universal tightness of the maximum cluster size]\label{theo:universaltightness}
	Let $G = (V, E, J)$ be a countable
	weighted graph and let $\Lambda \subseteq V$ be finite and non-empty. 
	Then the inequalities
	\begin{align}\label{eq:largest cluster tightness}
	&\p_\beta \left( \left|\Km(\Lambda)\right| \geq \alpha M_\beta (\Lambda) \right) \leq e^{-\frac{\alpha}{9}}\\
	\text{and } 
	& \label{eq:single cluster tightness}
	\p_\beta \left( \left|K_u \cap \Lambda \right| \geq \alpha M_\beta (\Lambda) \right) \leq
	e \cdot \p_\beta \left( \left|K_u \cap \Lambda \right| \geq  M_\beta (\Lambda) \right) e^{-\frac{\alpha}{9}}
	\end{align}
	hold for every $\beta \geq 0, \alpha \geq 1$, and $u\in V$.
\end{theorem}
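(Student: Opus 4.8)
The plan is to obtain this as a special case of \cite{hutchcroft2021power}, whose argument I would reconstruct as follows. The starting point is an elementary deterministic identity. For $n\ge 1$ set $X_n=\#\{v\in\Lambda:|K_v\cap\Lambda|\ge n\}$, the number of sites of $\Lambda$ lying in a cluster of $\Lambda$ of size at least $n$. Then the three events $\{|\Km(\Lambda)|\ge n\}$, $\{X_n\ge 1\}$ and $\{X_n\ge n\}$ coincide: a cluster of size $\ge n$ contributes at least $n$ to $X_n$, while conversely $X_n\ge 1$ forces some cluster to have size $\ge n$. In particular
\begin{equation*}
\p_\beta\!\left(|\Km(\Lambda)|\ge n\right)=\p_\beta\!\left(X_n\ge n\right)\le \frac{1}{n}\,\E_\beta[X_n]=\frac{1}{n}\sum_{v\in\Lambda}\p_\beta\!\left(|K_v\cap\Lambda|\ge n\right),
\end{equation*}
so \eqref{eq:largest cluster tightness} follows from the single-site estimate \eqref{eq:single cluster tightness} up to the harmless constant, and it suffices to prove the latter.

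The heart is a sprinkling-free recursive inequality for the single-cluster tail, proved by exploration. Fix $u$ and reveal $K_u\cap\Lambda$ in breadth-first order from $u$; on $\{|K_u\cap\Lambda|\ge m\}$ stop at the first time the revealed set $S$ has size at least $m$, and let $\cF$ be the $\sigma$-algebra generated by the revealed edges. Conditionally on $\cF$, every edge with both endpoints in $\Lambda\setminus S'$ (where $S'\subseteq S$ is the set of fully processed sites) is still open with its unconditional probability, so the configuration there is again a percolation configuration; moreover, on $\{|K_u\cap\Lambda|\ge m+n\}$ the remaining $\ge n$ sites of the cluster must be reachable, inside this conditionally fresh configuration, from the exploration frontier. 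Managed correctly, this yields a bound of the shape
\begin{equation*}
\p_\beta\!\left(|K_u\cap\Lambda|\ge m+n\right)\le C\,\p_\beta\!\left(|K_u\cap\Lambda|\ge m\right)\cdot\sup_{\Lambda'\subseteq\Lambda,\ w\in\Lambda'}\p_\beta\!\left(|K_w\cap\Lambda'|\ge n\right)
\end{equation*}
with $C$ an absolute constant. Iterating with $n$ equal to $M:=M_\beta(\Lambda)$ and feeding in the base estimate $\sup_{\Lambda'\subseteq\Lambda,\,w}\p_\beta(|K_w\cap\Lambda'|\ge M)\le\p_\beta(|\Km(\Lambda)|\ge M)\le e^{-1}$ (the first inequality because a cluster of $\Lambda'\subseteq\Lambda$ is contained in a cluster of $\Lambda$, the second by the definition of $M_\beta$) gives $\p_\beta(|K_u\cap\Lambda|\ge kM)\le (Ce^{-1})^{k-1}\,\p_\beta(|K_u\cap\Lambda|\ge M)$ for integer $k\ge 1$; interpolating between consecutive multiples of $M$ and tracking constants then produces \eqref{eq:single cluster tightness}, and \eqref{eq:largest cluster tightness} follows from the first paragraph.

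The main obstacle is making the exploration step honest, i.e.\ obtaining the recursive inequality with a constant $C$ that does not depend on $G$. The difficulty is that deleting the explored set can disconnect the remainder of the cluster, so the ``$\ge n$ further sites'' need not lie in a single cluster of $\Lambda\setminus S'$ but may be split among the clusters of the (possibly large) frontier vertices; a crude union bound over the frontier would ruin the uniformity. One must therefore choose the stopping rule and the mode of re-exploration carefully — revealing the frontier's clusters in one batch and appealing to a direct counting/van den Berg–Kesten-type estimate — so that only an absolute factor is lost at each round. The numerical value $1/9$ in the statement is simply the outcome of this bookkeeping (each round costs a factor of roughly $3$) and is not optimized; what matters is the exponential-in-$\alpha$ form, which is graph-independent.
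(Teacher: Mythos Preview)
The paper does not prove this theorem at all: it is stated explicitly as a quotation of \cite[Theorem 2.2]{hutchcroft2021power} and used as a black box. Your proposal begins by citing exactly this source, so on the point of comparison you match the paper precisely; the remainder of your write-up --- an attempted reconstruction of Hutchcroft's argument --- goes beyond anything the paper contains, and by your own admission leaves the key exploration step (controlling the contribution of a possibly large frontier uniformly in the graph) unresolved.
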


We will use this theorem at many points in this paper.
For the lower bound on $\delta$ we define $\theta \coloneqq \frac{1}{\delta}$. In the following we will always assume that
\begin{equation}\label{eq:theta decay}
\sum_{k=1}^{n}\p_\beta \left( |K_0| \geq k \right) \leq C n^{1-\theta}
\end{equation}
holds for some constant $C< \infty$. Note that this already implies that $\p_\beta \left(|K_0|\geq n\right) \leq n^{-1} \sum_{k=1}^{n} \p_\beta \left(|K_0|\geq k\right)  \leq C n^{-\theta}$. Furthermore, for $\theta < 1$ the bound $\p_\beta \left( |K_0| \geq k \right) \leq C k^{-\theta}$ for all $k \in\{1,\ldots, n\}$ also implies \eqref{eq:theta decay} with a different constant $C^\prime$ depending on $C$ and $\theta$.

For the lower bound on the exponent of the two-point function $2-\eta$ we define $\Lambda_n = \{-n,\ldots,n\}^d$ and assume that
\begin{equation}\label{eq:eta decay}
\frac{1}{|\Lambda_n|} \sum_{x \in \Lambda_n} \p_\beta \left(0 \leftrightarrow x\right) \leq C n^{-d+2-\eta}
\end{equation}
holds for some constant $C< \infty$. From this definition we directly see that we can always assume that $-d+2-\eta \leq 0$, as the statement is trivially true otherwise.

\subsection{Moments of the cluster size inside boxes}

In this section, we give bounds on the expected size of the cluster inside boxes, i.e., $\E_\beta\left[ \left|K_0(\Lambda_n)\right| \right]$, given the upper bounds on the tail of the cluster \eqref{eq:theta decay} or the two-point function \eqref{eq:eta decay}. For $\Lambda \subset \Z^d$ and $x\in \Lambda$ we use the notation $K_x(\Lambda)$ for the set of vertices $y \in \Lambda$ that are connected to $x$ through an open path that lies entirely within $\Lambda$.
The next lemma translates bounds of the tail of the cluster size  into bounds of the typical largest cluster inside boxes of size $n$. The proof of such a statement has already been done for many different models of percolation. We give a short proof for completeness.

\begin{lemma}\label{lem:max cluster quantile bound}
	Assume that
	\eqref{eq:theta decay} holds
	for some constant $1\leq C<\infty$. Let $\Lambda\subset \Z^d$ be a finite set of size $n$.
	Then one has
	\begin{equation}\label{eq:max cluster quantile bound}
	M_\beta (\Lambda) \leq 3C n^\frac{1}{1+\theta}
	\end{equation}
\end{lemma}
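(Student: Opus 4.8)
The plan is the standard first-moment argument that turns a tail bound on $|K_0|$ into control of the typical largest cluster inside a finite set, here fed by \eqref{eq:theta decay}. I would first record a purely deterministic observation: for an integer $m \geq 1$ let $Z_m$ be the number of vertices $x\in\Lambda$ with $|K_x(\Lambda)|\geq m$, equivalently the total number of vertices of $\Lambda$ lying in open clusters of the subgraph induced on $\Lambda$ that have at least $m$ vertices. On the event $\{|\Km(\Lambda)|\geq m\}$ the vertices of a largest such cluster already contribute at least $m$ to $Z_m$, and each such vertex $x$ satisfies $|K_x(\Lambda)| = |\Km(\Lambda)| \geq m$; hence $Z_m \geq |\Km(\Lambda)| \geq m$ on that event, i.e.\ $\{|\Km(\Lambda)|\geq m\}\subseteq\{Z_m\geq m\}$.

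Next I would apply Markov's inequality and use $|K_x(\Lambda)| \leq |K_x|$ together with the translation invariance of $J$:
\[
\p_\beta\left(|\Km(\Lambda)| \geq m\right) \leq \frac{\E_\beta[Z_m]}{m} = \frac{1}{m}\sum_{x\in\Lambda}\p_\beta\left(|K_x(\Lambda)|\geq m\right) \leq \frac{|\Lambda|}{m}\,\p_\beta\left(|K_0|\geq m\right).
\]
Since \eqref{eq:theta decay} gives $\p_\beta(|K_0|\geq m) \leq \frac{1}{m}\sum_{k=1}^{m}\p_\beta(|K_0|\geq k) \leq C m^{-\theta}$, and $|\Lambda| = n$, this reads $\p_\beta(|\Km(\Lambda)|\geq m) \leq C n\, m^{-1-\theta}$.

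The final step is to optimize the threshold $m$. The bound above is at most $e^{-1}$ as soon as $m \geq (eCn)^{1/(1+\theta)}$, so by monotonicity in $m$ it holds at the integer $m_0 := \lceil (eCn)^{1/(1+\theta)}\rceil$, and the definition \eqref{eq:typical value} then gives $M_\beta(\Lambda) \leq m_0$. In parallel, $|\Km(\Lambda)| \leq |\Lambda|$ always, so $M_\beta(\Lambda) \leq n + 1$ trivially. To conclude it suffices to check the elementary inequality $\min\{\lceil (eCn)^{1/(1+\theta)}\rceil,\ n+1\} \leq 3 C n^{1/(1+\theta)}$ for all integers $n \geq 1$ and all $C \geq 1$, $\theta \in [0,1]$ (the restriction $\theta \leq 1$ being forced by \eqref{eq:theta decay}, since $\p_\beta(|K_0|\geq 1) = 1$): one uses $(eCn)^{1/(1+\theta)} \leq 3 C n^{1/(1+\theta)}$ because $e \leq 3 \leq 3^{1+\theta} C^{\theta}$, notes that the $+1$ coming from the ceiling is swallowed by the resulting slack once $n$ is not too small, and handles the remaining finitely many small boxes with the trivial bound $n+1$.

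I do not expect any real obstacle here. The only point requiring some care is precisely this last bit of constant-chasing --- passing from the probability estimate to the quantile bound \eqref{eq:max cluster quantile bound} with the stated constant $3$, i.e.\ dealing with the integer rounding in the definition of $M_\beta(\Lambda)$ and with the small values of $n$ where the first-moment estimate is weaker than the trivial bound $|\Km(\Lambda)| \leq |\Lambda|$. The combinatorial identity and the first-moment step are entirely routine, which is no doubt why the authors announce ``a short proof for completeness''.
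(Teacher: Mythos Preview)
Your proposal is correct and follows essentially the same first-moment argument as the paper. The only cosmetic difference is that the paper plugs the target threshold $m=3Cn^{1/(1+\theta)}$ in directly and verifies $\p_\beta\big(|\Km(\Lambda)|\geq m\big)\leq C(3C)^{-1-\theta}\leq \tfrac{1}{3}<e^{-1}$, thereby sidestepping your optimisation step and the ceiling/small-$n$ discussion altogether.
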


\begin{proof}
	For $x\in \Lambda$, let $K_x(\Lambda)$ be the cluster of $x$ inside $\Lambda$. We use the notation $\tilde{C}=3C$ and get that
	\begin{align*}
	&\E_\beta \left[ \left| \left\{x \in \Lambda : |K_x(\Lambda)|\geq \tilde{C} n^\frac{1}{1+\theta} \right\} \right| \right]
	=
	\sum_{x \in \Lambda} \p_\beta \left( |K_x(\Lambda)| \geq \tilde{C} n^\frac{1}{1+\theta} \right)\\
	&
	\leq
	\sum_{x \in \Lambda} \p_\beta \left( |K_x| \geq \tilde{C} n^\frac{1}{1+\theta} \right)
	\leq \sum_{x \in \Lambda} C \tilde{C}^{-\theta} n^{-\frac{\theta}{1+\theta}}
	= C \tilde{C}^{-\theta} n n^{-\frac{\theta}{1+\theta}}
	= C \tilde{C}^{-\theta}  n^{\frac{1}{1+\theta}}\text.
	\end{align*}
	If there is one $x\in \Lambda$ such that $|K_x(\Lambda)|\geq \tilde{C} n^\frac{1}{1+\theta}$, then there are at least $\tilde{C} n^\frac{1}{1+\theta}$ many such $x\in \Lambda$. So in particular, if $\left|\Km(\Lambda)\right| \geq \tilde{C} n^{\frac{1}{1+\theta}}$, then there are at least $\tilde{C} n^{\frac{1}{1+\theta}}$ many vertices $x\in \Lambda$ with $|K_x(\Lambda)|\geq \tilde{C} n^\frac{1}{1+\theta}$. This implies that
	\begin{align*}
	\mathbbm{1}_{\left\{\left|\Km(\Lambda)\right| \geq \tilde{C} n^{\frac{1}{1+\theta}}\right\}} 
	\leq 
	\frac{1}{\tilde{C} n^\frac{1}{1+\theta}} \left| \left\{x \in \Lambda : |K_x(\Lambda)|\geq \tilde{C} n^\frac{1}{1+\theta} \right\} \right|
	\end{align*}
	and taking expectations on both sides yields that
	\begin{align*}
	& \p_\beta \left( \left|\Km(\Lambda)\right| \geq \tilde{C} n^{\frac{1}{1+\theta}} \right)
	\leq
	\frac{1}{\tilde{C} n^\frac{1}{1+\theta}} \E_\beta \left[\left| \left\{x \in \Lambda : |K_x(\Lambda)|\geq \tilde{C} n^\frac{1}{1+\theta} \right\} \right|\right]\\
	& \leq \frac{1}{\tilde{C} n^\frac{1}{1+\theta}} C \tilde{C}^{-\theta}  n^{\frac{1}{1+\theta}} = C \tilde{C}^{-1-\theta} = C (3C)^{-1-\theta} < \frac{1}{3} < \frac{1}{e}
	\end{align*}
	which shows that $M_\beta(\Lambda) \leq 3Cn^\frac{1}{1+\theta}$.
	
\end{proof}

\begin{lemma}\label{lem:cluster in box moment bound}
	Assume that \eqref{eq:theta decay} holds. Let $\Lambda\subset \Z^d$ be a finite set of size $n$.
	Then there exists a constant $C_2=C_2(C,\theta)$ such that
	\begin{equation}\label{eq:cluster in box moment bound}
	\E_\beta\left[\left|K_0(\Lambda)\right|\right] \leq C_2 n^{\frac{1-\theta}{1+\theta}}.
	\end{equation}
\end{lemma}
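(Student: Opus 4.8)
The plan is to split the contribution to $\E_\beta[|K_0(\Lambda)|]$ according to whether the cluster of $0$ inside $\Lambda$ is "small" or "large", with the threshold chosen to be a multiple of $M_\beta(\Lambda)$, which by \Cref{lem:max cluster quantile bound} we know is at most $3C n^{1/(1+\theta)}$. Write $m = M_\beta(\Lambda)$. Using the layer-cake formula, $\E_\beta[|K_0(\Lambda)|] = \sum_{k\ge 1} \p_\beta(|K_0(\Lambda)| \ge k)$. For the range $k \le m$ I would simply bound $\p_\beta(|K_0(\Lambda)|\ge k) \le \p_\beta(|K_0|\ge k) \le C k^{-\theta}$ (this consequence of \eqref{eq:theta decay} is noted right after that display), so that this part of the sum is at most $\sum_{k=1}^{m} C k^{-\theta} \le C' m^{1-\theta}$, and since $m \le 3C n^{1/(1+\theta)}$ this is $O\!\left(n^{(1-\theta)/(1+\theta)}\right)$, which is exactly the target order.

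For the tail range $k > m$, the point is that $|K_0(\Lambda)| \le |\Km(\Lambda)|$ always, so $\p_\beta(|K_0(\Lambda)| \ge k) \le \p_\beta(|\Km(\Lambda)| \ge k)$, and now I would invoke the universal tightness bound \eqref{eq:largest cluster tightness} from \Cref{theo:universaltightness}: writing $k = \alpha m$ with $\alpha \ge 1$ gives $\p_\beta(|\Km(\Lambda)|\ge \alpha m) \le e^{-\alpha/9}$. Summing over $k > m$, i.e. integrating $e^{-k/(9m)}$, yields a contribution of order $m \le 3C n^{1/(1+\theta)}$. The only subtlety is comparing $n^{1/(1+\theta)}$ with the target exponent $n^{(1-\theta)/(1+\theta)}$: these differ, and $1/(1+\theta) \ge (1-\theta)/(1+\theta)$, so naively the tail term is too big.

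The resolution — and what I expect to be the main obstacle — is that one must not bound $\p_\beta(|K_0(\Lambda)|\ge k)$ for a single fixed vertex $0$ by the largest-cluster probability alone; one needs the factor $\p_\beta(|K_0\cap\Lambda|\ge m)$ that appears in the \emph{single-cluster} tightness bound \eqref{eq:single cluster tightness}. That is, for $k = \alpha m$,
\[
\p_\beta(|K_0(\Lambda)| \ge \alpha m) \le \p_\beta(|K_0 \cap \Lambda| \ge \alpha m) \le e\, \p_\beta(|K_0\cap\Lambda| \ge m)\, e^{-\alpha/9},
\]
and $\p_\beta(|K_0\cap\Lambda|\ge m) \le \p_\beta(|K_0|\ge m) \le C m^{-\theta}$. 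Hence the tail sum is bounded by $e C m^{-\theta} \sum_{k>m} e^{-k/(9m)} = O(m^{1-\theta}) = O\!\left(n^{(1-\theta)/(1+\theta)}\right)$, matching the small-$k$ contribution. Combining the two ranges gives $\E_\beta[|K_0(\Lambda)|] \le C_2 n^{(1-\theta)/(1+\theta)}$ for a constant $C_2 = C_2(C,\theta)$, as claimed. The remaining work is purely the routine bookkeeping of the two geometric/power-law sums and tracking how $C_2$ depends on $C$ and $\theta$ (the $\sum k^{-\theta}$ bound blows up as $\theta \uparrow 1$, but stays finite, while the geometric sum contributes the constant $9$); I would keep those estimates short.
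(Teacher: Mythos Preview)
Your proposal is correct and follows essentially the same route as the paper: split the layer-cake sum at $M_\beta(\Lambda)$, bound the small range by the cluster-tail hypothesis, and bound the large range via the single-cluster tightness inequality \eqref{eq:single cluster tightness} so as to recover the crucial extra factor $\p_\beta(|K_0\cap\Lambda|\ge M)\le C M^{-\theta}$ and end up with $O(M^{1-\theta})$. The only cosmetic difference is that the paper applies assumption \eqref{eq:theta decay} directly to the partial sum $\sum_{k=1}^{M}\p_\beta(|K_0|\ge k)\le C M^{1-\theta}$, whereas you pass through the pointwise bound $\p_\beta(|K_0|\ge k)\le Ck^{-\theta}$ and then re-sum; this costs an extra harmless factor of order $1/(1-\theta)$ in $C_2$.
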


\begin{proof}
	The proof is heavily based on the use of \Cref{theo:universaltightness}. For abbreviation, we simply write $M=M_\beta(\Lambda)$. Thus we get that
	\begin{align*}
	&\E_\beta\left[\left|K_0(\Lambda)\right|\right] = 
	\sum_{k=1}^{\infty} \p_\beta \left( \left|K_0(\Lambda)\right| \geq k \right)
	= \sum_{l=0}^{\infty} \sum_{k=1}^{M}
	\p_\beta \left( \left|K_0(\Lambda)\right| \geq l M + k  \right) \\
	& = \sum_{k=1}^{M}
	\p_\beta \left( \left|K_0(\Lambda)\right| \geq  k \right)
	+
	\sum_{l=1}^{\infty} \sum_{k=1}^{M}
	\p_\beta \left( \left|K_x(\Lambda)\right| \geq l M + k  \right)\\
	&
	\leq
	C M^{1-\theta}
	+
	\sum_{l=1}^{\infty} \sum_{k=1}^{M}
	\p_\beta \left( \left|K_0(\Lambda)\right| \geq l M \right)\\
	&
	\overset{\eqref{eq:single cluster tightness}}{\leq}
	C M^{1-\theta}
	+
	M
	\sum_{l=1}^{\infty} 
	e
	\p_\beta \left( \left|K_0(\Lambda)\right| \geq  M \right) e^{-\frac{l}{9}}\\
	&
	\leq C M^{1-\theta} + e  C M^{1-\theta} \sum_{l=1}^{\infty} 
	e^{-\frac{l}{9}}
	\leq C^{\prime } M^{1-\theta} \leq C_2 n^{\frac{1-\theta}{1+\theta}}
	\end{align*}
	for some constants $C^\prime, C_2 < \infty$. Here we used the result of \Cref{lem:max cluster quantile bound} for the last inequality.
\end{proof}

The next Lemma translates the average bound on the two-point function \eqref{eq:eta decay} into bounds on the restricted cluster size.  For two sets $A,B\subset\Z^d$ we introduce the notation $A \overset{\Lambda_n}{\longleftrightarrow} B$, meaning that there exists a path from $A$ to $B$ that uses edges with both endpoints in $\Lambda_n$ only.

\begin{lemma}
	Assume that \eqref{eq:eta decay} holds. Then one has
	\begin{align*}
	\E_\beta \left[\left|K_0(\Lambda_n)\right|\right] \leq 3^d C n^{2-\eta}.
	\end{align*}
	for all $x \in \Lambda_n$.
\end{lemma}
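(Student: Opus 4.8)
The plan is to bound $\E_\beta\left[\left|K_0(\Lambda_n)\right|\right]$ by summing the restricted two-point function over $\Lambda_n$ and then comparing with the average-decay hypothesis \eqref{eq:eta decay}. First I would write
\[
\E_\beta\left[\left|K_0(\Lambda_n)\right|\right] = \sum_{x \in \Lambda_n} \p_\beta\left(0 \overset{\Lambda_n}{\longleftrightarrow} x\right) \leq \sum_{x \in \Lambda_n} \p_\beta\left(0 \leftrightarrow x\right),
\]
using the trivial monotonicity that a path within $\Lambda_n$ is in particular a path in $\Z^d$. The issue is that \eqref{eq:eta decay} controls the average of $\p_\beta(0 \leftrightarrow x)$ over $\Lambda_n$, i.e. $\sum_{x \in \Lambda_n}\p_\beta(0\leftrightarrow x) \leq C|\Lambda_n| n^{-d+2-\eta}$, and since $|\Lambda_n| = (2n+1)^d \leq (3n)^d = 3^d n^d$ for $n \geq 1$, this gives $\sum_{x\in\Lambda_n}\p_\beta(0\leftrightarrow x) \leq 3^d C n^d \cdot n^{-d+2-\eta} = 3^d C n^{2-\eta}$, which is exactly the claimed bound. (The statement's ``$\E_\beta[K_0(\Lambda_n)]$'' and the dangling ``for all $x \in \Lambda_n$'' are evidently typos for $\E_\beta[|K_0(\Lambda_n)|]$ with no free variable $x$; I would state the clean version.)

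So the argument is essentially a two-line estimate and there is no real obstacle: the only subtlety is making sure the comparison $|\Lambda_n| \leq 3^d n^d$ is valid, which holds for all $n \geq 1$, and noting the $n=0$ case is trivial. One should also double-check that \eqref{eq:eta decay} is being used with translation invariance of $\p_\beta$ so that $\p_\beta(0\leftrightarrow x)$ only depends on $x$ and the sum over $\Lambda_n$ makes sense as written; this is already built into the standing assumptions on $J$.

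In full: for $n \geq 1$,
\[
\E_\beta\left[\left|K_0(\Lambda_n)\right|\right] = \sum_{x\in\Lambda_n}\p_\beta\left(0\overset{\Lambda_n}{\longleftrightarrow}x\right) \leq \sum_{x\in\Lambda_n}\p_\beta\left(0\leftrightarrow x\right) = |\Lambda_n|\cdot\frac{1}{|\Lambda_n|}\sum_{x\in\Lambda_n}\p_\beta\left(0\leftrightarrow x\right) \leq (2n+1)^d\, C n^{-d+2-\eta} \leq 3^d C n^{2-\eta},
\]
where the penultimate inequality is \eqref{eq:eta decay} and the last uses $(2n+1)^d \leq (3n)^d$. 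This completes the proof.
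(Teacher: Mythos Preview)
Your proof is correct and essentially identical to the paper's own argument: both write $\E_\beta[|K_0(\Lambda_n)|]=\sum_{x\in\Lambda_n}\p_\beta(0\overset{\Lambda_n}{\longleftrightarrow}x)\leq \sum_{x\in\Lambda_n}\p_\beta(0\leftrightarrow x)$, apply \eqref{eq:eta decay}, and use $(2n+1)^d\leq 3^d n^d$. Your observation about the dangling ``for all $x\in\Lambda_n$'' and the missing $|\cdot|$ being typos is also accurate.
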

\begin{proof}
	The $\infty$-distance between different $0$ and $x \in \Lambda_n$ is at most $n$. We have that $\left|\Lambda_n\right|= (2n+1)^d$. Thus linearity of expectation gives that
	\begin{align*}
	&\E_\beta \left[\left|K_0(\Lambda_n)\right|\right] = \sum_{x \in \Lambda_n} \p_\beta \left(0 \overset{\Lambda_n}{\longleftrightarrow} x \right)
	\leq
	\left|\Lambda_{n}\right| \frac{1}{\left|\Lambda_{n}\right|}
	\sum_{x \in \Lambda_{n}} \p_\beta \left(0 \leftrightarrow x \right)\\
	& \leq (2n+1)^d C n^{-d+2-\eta} \leq 3^d C n^{2-\eta}.
	\end{align*}
\end{proof}

\subsection{Isoperimetric inequalities in expectation}\label{sec:isoper}

In this section, we use the isoperimetry of the box $\Lambda_n = \{-n,\ldots, n\}^d$ in order to bound the expected number of edges at the boundary of the box, for which the end inside the box is connected to $0$.  For long-range percolation with a kernel $J : \Z^d\times \Z^d \rightarrow \left[0,\infty\right)$ satisfying $J(x,y) \simeq \|x-y\|^{-d-\alpha}$ the isoperimetry of the box $\Lambda_n$ changes at $\alpha =1$. More precisely, if we denote by $\partial \Lambda_n$ the set of open edges with exactly one endpoint in $\Lambda_n$, we have that
\begin{align*}
\E_\beta\left[ |\partial \Lambda_n | \right] \simeq \begin{cases}
n^{d-\alpha} & \text{ if } \alpha <1\\
n^{d-1}\log(n) & \text{ if } \alpha =1\\
n^{d-1} & \text{ if } \alpha > 1\\
\end{cases}.
\end{align*}
Consequently, we see that for $\alpha<1$ long-range effects determine the isoperimetry of the box, whereas for $\alpha\geq 1$ the short-range effects dominate, with logarithmic corrections at $\alpha = 1$. In particular, a point $x\in \Lambda_n$ that is chosen uniformly at random will have of order $n^{-(\alpha\wedge 1)+o(1)}$ neighbors outside of the box. This is also the reason, why the term $\alpha \wedge 1$ pops up in the statements of \Cref{theo:clustersize} and \Cref{theo:twopointfct}. In the following, for two sets $A,B\subset \Z^d$ we use the notation $A\sim B$ if there exists a direct edge from $A$ to $B$. We also use a statement that was shown by Duminil-Copin and Tassion in \cite{duminil2016new,duminil2017new}. There it is shown that for $\beta \geq \beta_{c}$ and all finite sets $S\subset \Z^d$ containing the origin $0$ one has
\begin{align}\label{eq:duminilcopin tassion}
& \phi_\beta \left( S \right) \coloneqq
\sum_{x \in S} \sum_{y \notin S}
\left(1-e^{-\beta J(x,y)}\right) \p_\beta \left( 0 \overset{S}{\longleftrightarrow} x\right)
\geq  1\text.
\end{align}
Moreover, they also showed the reverse direction, i.e., that $\phi_\beta(S)\geq 1$ for all finite sets $S\subset \Z^d$ with $0\in S$ implies $\beta \geq \beta_c$, but we will not use this statement in our proof. Similar results to the result in \eqref{eq:duminilcopin tassion} were already shown previously, see for example \cite[Lemma 3.1]{kozma2011arm} or \cite[Lemma 5.1]{aizenman1986discontinuity}.

\begin{lemma}\label{lem:cluster outgoing}
	We write $K_0(\Lambda_k)$ for the set of vertices $y \in \Lambda_k$ that are connected to $0$ through an open path that lies entirely within $\Lambda_k$. Let $n\in \N$ be arbitrary and fixed. For $d=1$ and all $\alpha \in \left(0,1\right)$, respectively for $d>1$ and all $\alpha > 0$, and all $\beta > 0$, there exists a constant $C_3=C_3(\alpha,\beta, d)$ that does not depend on $n$, so that there exists a $k \in \{1,\ldots, n\}$ with
	\begin{align}\label{eq:cluster outgoing d=1}
	& \phi_\beta \left( \Lambda_k \right) =
	\sum_{x \in \Lambda_k} \sum_{y \notin \Lambda_k}
	\left(1-e^{-\beta J(x,y)}\right) \p_\beta \left( 0 \overset{\Lambda_k}{\longleftrightarrow} x\right)
	\leq C_3  \E_\beta\left[ \left| K_0\left(\Lambda_{n}\right) \right| \right]f(n,\alpha) 
	\end{align}
	where the function $f(n,\alpha)$ is defined by
	\begin{equation}\label{eq:f}
	f(n,\alpha) = \begin{cases}
	n^{-\alpha} & \text{ if } \alpha < 1 \\
	n^{-1}\log(n) & \text{ if } \alpha = 1 \\
	n^{-1} & \text{ if } \alpha > 1 \\
	\end{cases}.
	\end{equation}
\end{lemma}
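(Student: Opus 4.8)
The goal is: among $k \in \{1,\dots,n\}$, find one for which $\phi_\beta(\Lambda_k)$ is controlled by $\E_\beta[|K_0(\Lambda_n)|]$ times the isoperimetric profile $f(n,\alpha)$. The natural strategy is an averaging/pigeonhole argument over $k$: if for *every* $k \in \{1,\dots,n\}$ we had $\phi_\beta(\Lambda_k) > C_3 \E_\beta[|K_0(\Lambda_n)|] f(n,\alpha)$, then summing over $k$ would produce a lower bound on $\sum_{k=1}^n \phi_\beta(\Lambda_k)$ that contradicts an upper bound we prove directly. So the plan is: (i) bound $\sum_{k=1}^{n} \phi_\beta(\Lambda_k)$ from above by $C_3' \, n \, \E_\beta[|K_0(\Lambda_n)|] f(n,\alpha)$ for a suitable constant; (ii) conclude by pigeonhole that some single $k$ satisfies the claimed bound (after relabeling $C_3$).

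Let me sketch step (i). Write out
\begin{align*}
\sum_{k=1}^{n} \phi_\beta(\Lambda_k)
= \sum_{k=1}^{n} \sum_{x \in \Lambda_k} \sum_{y \notin \Lambda_k} \left(1 - e^{-\beta J(x,y)}\right) \p_\beta\left(0 \overset{\Lambda_k}{\longleftrightarrow} x\right).
\end{align*}
Since $\{0 \overset{\Lambda_k}{\longleftrightarrow} x\} \subseteq \{0 \overset{\Lambda_n}{\longleftrightarrow} x\}$ for $k \le n$, we may replace the connection event by $\p_\beta(0 \overset{\Lambda_n}{\longleftrightarrow} x)$, which decouples the $k$-dependence out of the probability. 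Then swap the order of summation so that $x,y$ are summed first and $k$ last: a pair $(x,y)$ contributes to the $k$-sum exactly for those $k$ with $x \in \Lambda_k$ and $y \notin \Lambda_k$, i.e. $\|x\|_\infty \le k < \|y\|_\infty$, giving a multiplicity of at most $(\|y\|_\infty - \|x\|_\infty)_+ \le \|x - y\|_\infty$. Using $1 - e^{-\beta J(x,y)} \le \beta J(x,y) \le \beta C_1 \|x-y\|^{-d-\alpha}$, the inner double sum becomes, up to constants,
\begin{align*}
\sum_{x \in \Lambda_n} \p_\beta\left(0 \overset{\Lambda_n}{\longleftrightarrow} x\right) \sum_{y \in \Z^d} \|x-y\|^{-d-\alpha} \, \|x-y\|_\infty \cdot \mathbbm{1}[\text{$(x,y)$ straddles some }\Lambda_k,\ k\le n].
\end{align*}
The key point is that the inner sum over $y$ — of $\|x-y\|^{1-d-\alpha}$, truncated so that $\|y\|_\infty \le n$ or so that $\|x-y\|_\infty \lesssim n$ — is bounded by $C(\alpha,d)\, n^{1-\alpha}$ when $\alpha<1$, by $C\log n$ when $\alpha = 1$, and by $C$ when $\alpha > 1$ (this is the standard computation behind the displayed isoperimetric profile $\E_\beta[|\partial\Lambda_n|]$ in the text), uniformly in $x \in \Lambda_n$. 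That is, it equals $C(\alpha,d)\, n \, f(n,\alpha)$. Pulling this uniform bound out, what remains is $\sum_{x \in \Lambda_n} \p_\beta(0 \overset{\Lambda_n}{\longleftrightarrow} x) = \E_\beta[|K_0(\Lambda_n)|]$, so $\sum_{k=1}^n \phi_\beta(\Lambda_k) \le C_3 \, n \, f(n,\alpha) \, \E_\beta[|K_0(\Lambda_n)|]$, which is exactly what step (ii) needs. (For $d=1$ the restriction $\alpha<1$ is what makes $\beta_c<\infty$, hence makes $\phi_\beta$ meaningful; otherwise the statement applies for all $\alpha>0$.)

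The main obstacle is the bookkeeping in the multiplicity/truncation step: one must verify carefully that the number of "levels" $k \in \{1,\dots,n\}$ straddled by a pair $(x,y)$ is at most $\|x-y\|_\infty$ (this is where the extra factor of $\|x-y\|$ that turns $\|x-y\|^{-d-\alpha}$ into $\|x-y\|^{1-d-\alpha}$ comes from), and that combining this with the cap $k \le n$ yields exactly the profile $f(n,\alpha)$ rather than something larger — in particular that the $y$ far from $\Lambda_n$ contribute negligibly. This is entirely a deterministic lattice-sum estimate, splitting the sum over $y$ into dyadic annuli $2^j \le \|x-y\|_\infty < 2^{j+1}$, each contributing $\lesssim 2^{j(d-1)} \cdot 2^{j(1-d-\alpha)} \cdot \min(2^j, n) = 2^{-j\alpha}\min(2^j,n)$ and then summing the geometric-type series in $j$, with the three regimes $\alpha<1$, $\alpha=1$, $\alpha>1$ producing the three cases of $f$. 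No probabilistic subtlety beyond the monotonicity $\{0\overset{\Lambda_k}{\longleftrightarrow}x\}\subseteq\{0\overset{\Lambda_n}{\longleftrightarrow}x\}$ and linearity of expectation is needed.
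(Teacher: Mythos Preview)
Your proposal is correct and follows essentially the same approach as the paper: average $\phi_\beta(\Lambda_k)$ over $k\in\{1,\dots,n\}$, replace $\p_\beta(0\overset{\Lambda_k}{\longleftrightarrow}x)$ by $\p_\beta(0\overset{\Lambda_n}{\longleftrightarrow}x)$ via monotonicity, perform a deterministic lattice sum, and pigeonhole. The only cosmetic difference is the order of the deterministic estimate --- you first sum over $k$ to extract the multiplicity $\min(\|x-y\|_\infty,n)$ and then sum over $y$, whereas the paper first bounds $\sum_{b\notin\Lambda_k}\|a-b\|^{-d-\alpha}\lesssim (k+1-\|a\|_\infty)^{-\alpha}$ and then sums over $k$; both orderings yield the same $n\,f(n,\alpha)$.
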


\begin{proof}
	For $x \in \Lambda_{n}$ we write $t_x \coloneqq \p_\beta\left(x \overset{\Lambda_{n}}{\longleftrightarrow} 0\right)$ and get that
	\begin{align}\label{eq:X_k bound 0}
	\sum_{x \in \Lambda_{n}} t_x & 
	=
	\sum_{x \in \Lambda_{n}} \p_\beta\left(x \overset{\Lambda_{n}}{\longleftrightarrow} 0\right)
	=
	\E_\beta\left[\left| K_0(\Lambda_{n})\right| \right] \text.
	\end{align}
	Next, we define $X_k$ as the number of open edges between $\Lambda_k$ and $(\Lambda_k)^C$ for which one end is connected to $0$ within $\Lambda_k$. Formally, we define
	\begin{align*}
	X_k \coloneqq \left| \left\{ e=\{a,b\} \text{ open} : a \in \Lambda_k, b \notin \Lambda_k  , \text{ and } 0 \overset{\Lambda_k}{\longleftrightarrow} a \right\} \right|\text.
	\end{align*}
	The occupation status of edges inside $\Lambda_k$ and of edges with one end outside of $\Lambda_k$ are independent random variables. So by linearity of expectation one has
	\begin{align*}
	\E_\beta \left[X_k\right] =
	\sum_{a \in \Lambda_k} \sum_{b \notin \Lambda_k}
	\left(1-e^{-\beta J(a,b)}\right) \p_\beta \left( 0 \overset{\Lambda_k}{\longleftrightarrow} a\right)
	=
	\phi_\beta \left(\Lambda_k\right)\text.
	\end{align*}
	Thus, it suffices to bound the expected value of $X_k$ and show that there exists a $k \in \{1,\ldots, n\}$ such that the expected value $\E_\beta \left[X_k\right]$ is reasonably small, as in \eqref{eq:cluster outgoing d=1}.
	For this, let $K$ be a random variable that is uniformly distributed on $\{1,\ldots, n\}$ and is independent of the percolation configuration. We write $\mathbf{P}_\beta$ for the joint distribution of the percolation configuration and $K$, and $\mathbf{E}_\beta$ for its expectation. Thus we get
	\begin{align} \notag
	& \mathbf{E}_\beta \left[X_K\right] =
	\mathbf{E}_\beta \left[
	\left| \left\{ \{a,b\} \text{ open} : a \in \{-K,\ldots,K\}^d, b \notin \{-K,\ldots,K\}^d  , \text{ and } 0 \overset{\Lambda_K}{\longleftrightarrow} a \right\} \right| \right]\\
	& \notag = \frac{1}{n} \sum_{k=1}^{n}
	\E_\beta \left[
	\left| \left\{ \{a,b\} \text{ open} : a \in \{-k,\ldots,k\}^d, b \notin \{-k,\ldots,k\}^d  , \text{ and } 0 \overset{\Lambda_k}{\longleftrightarrow} a \right\} \right| \right]\\
	& \label{eq:X_K expectation bound 1} = \frac{1}{n} \sum_{k=1}^{n} \sum_{a \in \Lambda_{n}} \sum_{b \in \Z^d} \E_\beta \left[ \mathbbm{1}_{\{a \in \Lambda_k\}} \mathbbm{1}_{\{b \notin \Lambda_k\}} \mathbbm{1}_{\left\{ 0 \overset{\Lambda_k}{\longleftrightarrow} a \right\}} \mathbbm{1}_{\{a \sim b\}} \right] \text.
	\end{align}
	For fixed $k$, the events $\{0 \overset{\Lambda_k}{\longleftrightarrow} a \}$ and $\{\{a,b\} \text{ is open}\}$ are independent for $b\notin \Lambda_k$, as the first event depends only on edges with both endpoints inside $\Lambda_k$. For fixed $a\in\Lambda_{n}$, the expression $\p_\beta \left(0\overset{\Lambda_k}{\longleftrightarrow} a\right)$ can only be positive if $k \geq \|a\|_\infty$. Combining the two previous observations we get that
	\begin{align}
	&\notag \mathbf{E}_\beta \left[X_K\right] = \frac{1}{n} \sum_{k=1}^{n} \sum_{a \in \Lambda_{k}} \sum_{b \in \Z^d\setminus \Lambda_k} \p_\beta \left(0 \overset{\Lambda_k}{\longleftrightarrow} a \right)  \p_\beta \left(a \sim b\right)\\
	& \notag = \frac{1}{n} \sum_{a \in \Lambda_{n}} \sum_{k=1\vee \|a\|_\infty}^{n}  \sum_{b \in \Z^d\setminus \Lambda_k} \p_\beta \left(0 \overset{\Lambda_k}{\longleftrightarrow} a \right)  \p_\beta \left(a \sim b\right)\\
	& \notag
	\leq
	\sum_{a \in \Lambda_{n}} \p_\beta \left( 0 \overset{\Lambda_n}{\longleftrightarrow} a \right) \left(\frac{1}{n} \sum_{k=1 \vee \|a\|_\infty}^{n}  \ \sum_{b \in \Z^d\setminus \Lambda_k} \left( 1-e^{-\beta J(a,b)} \right) \right)\\
	& \label{eq:X_K expectation bound 2}
	\leq 
	\sum_{a \in \Lambda_{n}} t_a \left(\frac{1}{n} \sum_{k=1 \vee \|a\|_\infty}^{n}  \ \sum_{b \in \Z^d\setminus \Lambda_k} \beta C_1 \|a-b\|^{-d-\alpha}\right)\text,
	\end{align}
	where we used that $1-e^{-x}\leq x$ for the last inequality.
	Now, for fixed $a \in \Lambda_{n}$ and $k\geq \|a\|_\infty$ there exist constants $C_1^\prime = C_1^\prime(C_1,d,\beta) < \infty$ and $ C_1^{\prime\prime} = C_1^{\prime\prime}(C_1,d,\alpha,\beta) < \infty$ such that
	\begin{align}
	\notag &\sum_{b \in \Z^d\setminus \Lambda_k} \beta C_1 \|a-b\|^{-d-\alpha} 
	\leq 
	\sum_{l=k+1 - \|a\|_\infty}^{\infty} \sum_{b \in \Z^d : \|b-a\|_\infty=l} \beta C_1 \|a-b\|^{-d-\alpha}\\
	& \notag 
	=
	\sum_{l=k+1 - \|a\|_\infty}^{\infty} \sum_{b \in \Z^d : \|b\|_\infty=l} \beta C_1 \|b\|^{-d-\alpha}
	\leq
	\sum_{l=k+1 - \|a\|_\infty}^{\infty} C_1^\prime l^{d-1} l^{-d-\alpha}\\
	& \label{eq:X_k bound 3}
	= C_1^\prime \sum_{l=k+1 - \|a\|_\infty}^{\infty}  l^{-1-\alpha} \leq C_1^{\prime \prime} (k+1-\|a\|_\infty)^{-\alpha}.
	\end{align}
	Using \eqref{eq:X_k bound 3} we see that
	\begin{align}
	& \notag \frac{1}{n} \sum_{k=1 \vee \|a\|_\infty}^{n}  \ \sum_{b \in \Z^d\setminus \Lambda_k} \beta C_1 \|a-b\|^{-d-\alpha} 
	\leq 
	\frac{1}{n} \sum_{k= \|a\|_\infty}^{n} C_1^{\prime \prime} (k+1-\|a\|_\infty)^{-\alpha}\\
	&
	\leq
	C_1^{\prime \prime} \frac{1}{n} \sum_{k=1}^{n+1} k^{-\alpha}
	\leq \hat{C_1} f(n,\alpha)
	\end{align}
	for a constant $\hat{C_1} = \hat{C_1}(C_1^{\prime\prime},\alpha)< \infty$. Inserting this result into \eqref{eq:X_K expectation bound 2} yields
	\begin{align*}
	\frac{1}{n} \sum_{k=1}^{n} \E_\beta\left[X_k\right]
	=
	\mathbf{E}_\beta\left[X_K\right] 
	\leq \sum_{a \in \Lambda_{n}} t_a \hat{C_1} f(n,\alpha) \overset{\eqref{eq:X_k bound 0}}{= } \E_\beta \left[ \left|K_0\left(\Lambda_{n}\right)\right| \right] \hat{C_1}   f(n,\alpha)\text.
	\end{align*}
	So in particular there needs to exist at least one $k \in \{1,\ldots, n\}$ for which $\E_\beta\left[X_k\right] \leq \E_\beta \left[ \left|K_0\left(\Lambda_{n}\right)\right| \right] \hat{C_1}  f(n,\alpha)$, which finishes the proof.
\end{proof}

\subsection{The proof of \Cref{theo:clustersize} and \Cref{theo:twopointfct}}

Now we are ready to go to the main proofs. \Cref{theo:clustersize} is an immediate consequence of \Cref{propo:cluster decay} and \Cref{theo:twopointfct} is an immediate consequence of \Cref{propo:two point decay}. Also remember the definition of the function $f$ defined in \eqref{eq:f} which we will use at several points below.

\begin{proposition}\label{propo:two point decay}
	Let $\alpha \in (0,1)$ for $d=1$, respectively $\alpha>0$ for $d>1$, and assume that there exists a constant $C_1<\infty$ such that $J(x,y) \leq C_1 \|x-y\|^{-d-\alpha}$ for all $x,y \in \Z^d$. Provided $\beta_c<\infty$ one has $ \sum_{x \in \Lambda_{n}} \pc \left( 0 \leftrightarrow x \right) \geq \frac{1}{C_3} f(n,\alpha)^{-1}$ where $C_3$ is the same constant as in \Cref{lem:cluster outgoing}.
\end{proposition}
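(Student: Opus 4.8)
The plan is to read off the proposition essentially for free from \Cref{lem:cluster outgoing} together with the Duminil--Copin--Tassion inequality \eqref{eq:duminilcopin tassion}, both evaluated at $\beta=\beta_c$. The hypothesis $\beta_c<\infty$ is exactly what makes this evaluation legitimate: \eqref{eq:duminilcopin tassion} is available for every $\beta\geq\beta_c$, and in particular at $\beta=\beta_c$ itself once we know $\beta_c$ is finite.

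First I would apply \Cref{lem:cluster outgoing} at $\beta=\beta_c$ (the hypotheses on $d$, $\alpha$ and the kernel bound $J(x,y)\leq C_1\|x-y\|^{-d-\alpha}$ are precisely those assumed here, so the constant $C_3$ is the same one): for each $n$ there is some $k=k(n)\in\{1,\dots,n\}$ with
\[
\phi_{\beta_c}(\Lambda_k)\;=\;\sum_{x\in\Lambda_k}\sum_{y\notin\Lambda_k}\bigl(1-e^{-\beta_c J(x,y)}\bigr)\,\pc\!\left(0\overset{\Lambda_k}{\longleftrightarrow}x\right)\;\leq\;C_3\,\Ec\!\left[\left|K_0(\Lambda_n)\right|\right]f(n,\alpha).
\]
Next, since $\Lambda_k$ is a finite set containing the origin and $\beta_c\geq\beta_c$, inequality \eqref{eq:duminilcopin tassion} gives $\phi_{\beta_c}(\Lambda_k)\geq 1$. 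Chaining the two bounds yields $1\leq C_3\,\Ec[|K_0(\Lambda_n)|]\,f(n,\alpha)$, i.e.
\[
\Ec\!\left[\left|K_0(\Lambda_n)\right|\right]\;\geq\;\frac{1}{C_3}\,f(n,\alpha)^{-1}.
\]

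Finally I would pass from the restricted cluster to the genuine two-point function. By linearity of expectation $\Ec[|K_0(\Lambda_n)|]=\sum_{x\in\Lambda_n}\pc(0\overset{\Lambda_n}{\longleftrightarrow}x)$, and a connection within $\Lambda_n$ is in particular a connection, so $\pc(0\overset{\Lambda_n}{\longleftrightarrow}x)\leq\pc(0\leftrightarrow x)$ for every $x\in\Lambda_n$. Combining, $\sum_{x\in\Lambda_n}\pc(0\leftrightarrow x)\geq\Ec[|K_0(\Lambda_n)|]\geq\frac{1}{C_3}f(n,\alpha)^{-1}$, which is the claim.

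There is no real obstacle in this proposition itself: all the genuine work — the averaging over the scales $k\in\{1,\dots,n\}$ and the isoperimetric tail estimate \eqref{eq:X_k bound 3} — has already been carried out in the proof of \Cref{lem:cluster outgoing}, and \eqref{eq:duminilcopin tassion} is used as a black box. The only points that merit a moment's attention are (i) verifying that the hypotheses here are exactly those of \Cref{lem:cluster outgoing}, so that literally the same constant $C_3$ may be used, and (ii) that $\beta_c<\infty$ is what licenses invoking \eqref{eq:duminilcopin tassion} at criticality rather than only strictly above it.
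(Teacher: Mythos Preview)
Your proof is correct and is essentially identical to the paper's own argument: both apply \Cref{lem:cluster outgoing} at $\beta_c$, combine with \eqref{eq:duminilcopin tassion} to get $\Ec[|K_0(\Lambda_n)|]\geq C_3^{-1}f(n,\alpha)^{-1}$, and then pass to the unrestricted two-point function via linearity and monotonicity. The only cosmetic difference is that the paper phrases the first step as a contradiction while you chain the inequalities directly.
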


\begin{proof}
	We will first show that $\Ec \left[|K_0\left(\Lambda_{n}\right)|\right] \geq \frac{1}{C_3} f(n,\alpha)^{-1}$.
	Assume the contrary, i.e., $\Ec \left[|K_0\left(\Lambda_{n}\right)|\right] < \frac{1}{C_3} f(n,\alpha)^{-1}$. Then by \Cref{lem:cluster outgoing} there exists a $k \in \{1,\ldots,n\}$ with 
	\begin{align*}
	\phi_{\beta_c} \left( \Lambda_k \right) \leq C_3 \Ec \left[\left|K_0 \left(\Lambda_{n}\right) \right| \right] f(n,\alpha) < 1
	\end{align*}
	which is a contradiction to \eqref{eq:duminilcopin tassion}.
	Now, by linearity of expectation we have that
	\begin{align}\label{eq:inequality}
	&\sum_{x \in \Lambda_{n}} \pc \left( 0 \leftrightarrow x \right)
	\geq
	\sum_{x \in \Lambda_{n}} \pc \left( 0 \overset{\Lambda_{n}}{\longleftrightarrow} x \right)
	=
	\Ec \left[\left|K_0\left(\Lambda_{n}\right)\right|\right]
	\geq
	\frac{1}{C_3}f(n,\alpha)^{-1}.
	\end{align}
\end{proof}
\noindent
\Cref{propo:two point decay} shows in particular that for a small enough constant $c>0$ we have
\begin{align*}
\frac{1}{|\Lambda_{n}|}\sum_{x \in \Lambda_{n}} \pc \left( 0 \leftrightarrow x \right)
\geq c n^{-d}f(n,\alpha)^{-1} = \begin{cases}
c n^{-d+\alpha} & \text{for } \alpha < 1 \\
c n^{-d+1} \log(n)^{-1} & \text{for } \alpha = 1 \\
c n^{-d+1} & \text{for } \alpha > 1 \\
\end{cases}
\end{align*}
which shows that the exponent $2-\eta$ defined in \eqref{eq:twopointfct} satisfies $2-\eta \geq \alpha \wedge 1$, provided the exponent $2-\eta$ exists. In \cite{hutchcroft2022sharp} it is shown that $\frac{1}{|\Lambda_{n}|}\sum_{x \in \Lambda_{n}} \pc \left( 0 \leftrightarrow x \right) = \mathcal{O} \left(n^{-d+\alpha}\right)$. Combining this with \Cref{propo:two point decay} we get that for $\alpha < 1$ and a kernel $J$ satisfying $J(x,y)\simeq \|x-y\|^{-d-\alpha}$ one has
\begin{align*}
\frac{1}{|\Lambda_{n}|}\sum_{x \in \Lambda_{n}} \pc \left( 0 \leftrightarrow x \right) \simeq n^{-d+\alpha}.
\end{align*}
So when we alternatively define the two-point critical exponent $2-\eta$ by the averaged version $\frac{1}{\left|\Lambda_n\right|}\sum_{x \in \Lambda_{n}} \pc \left( 0 \leftrightarrow x \right) \approx n^{-d+2-\eta}$, then we see that this exponent exists for $\alpha<1$ and equals $\alpha$. However, it is not clear whether this statements holds without averaging, i.e., if the exponent $2-\eta$ defined as in \eqref{eq:twopointfct} also exists. See also \cite[Problem 4.3]{hutchcroft2022sharp} for a related problem. Next, we consider the lower bound on the exponent $\delta$.

\begin{proposition}\label{propo:cluster decay}
	Let $\alpha \in (0,1)$ for $d=1$, respectively $\alpha>0$ for $d>1$, and assume that there exists a constant $C_1<\infty$ such that $J(x,y) \leq C_1 \|x-y\|^{-d-\alpha}$ for all $x,y \in \Z^d$. Suppose that $\beta_c<\infty$ and $\sum_{k=1}^n \pc \left( |K_0| \geq k \right) \leq C n^{1-\frac{1}{\delta}}$ for all $n\in \N$. Then $\delta \geq \frac{d+(\alpha\wedge 1)}{d-(\alpha\wedge 1)}$.
\end{proposition}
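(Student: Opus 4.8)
The plan is to run the same argument as in the proof of \Cref{propo:two point decay}, but now starting from the cluster-tail hypothesis. Set $\theta = \frac{1}{\delta}$, so that the assumption $\sum_{k=1}^{n} \pc\left(|K_0| \geq k\right) \leq C n^{1-1/\delta}$ is exactly \eqref{eq:theta decay} at $\beta = \beta_c$. Applying \Cref{lem:cluster in box moment bound} to the box $\Lambda_n$, which has $|\Lambda_n| = (2n+1)^d \leq 3^d n^d$ vertices, produces a constant $C_2$ with
\[
\Ec\left[\left|K_0(\Lambda_n)\right|\right] \leq C_2 (2n+1)^{d\frac{1-\theta}{1+\theta}} \leq C_2\, 3^{d}\, n^{d\frac{1-\theta}{1+\theta}} .
\]

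Next I would feed this into the averaged isoperimetric inequality. By \Cref{lem:cluster outgoing} there is some $k \in \{1,\ldots,n\}$ with $\phi_{\beta_c}(\Lambda_k) \leq C_3\, \Ec\left[\left|K_0(\Lambda_n)\right|\right] f(n,\alpha)$, whereas the Duminil-Copin--Tassion bound \eqref{eq:duminilcopin tassion} (which applies since $\beta_c \geq \beta_c$ and $\beta_c < \infty$) gives $\phi_{\beta_c}(\Lambda_k) \geq 1$. Chaining the two inequalities with the moment bound above yields
\[
1 \leq C_3 C_2\, 3^{d}\, n^{d\frac{1-\theta}{1+\theta}}\, f(n,\alpha) \qquad \text{for every } n \in \N .
\]

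It then remains to read off the exponent. Since $f(n,\alpha) = n^{-(\alpha\wedge 1)}$ for $\alpha \neq 1$ and $f(n,1) = n^{-1}\log n$, the right-hand side is bounded by a constant times $n^{\,d\frac{1-\theta}{1+\theta} - (\alpha\wedge 1)}(1+\log n)$. If the exponent $d\frac{1-\theta}{1+\theta} - (\alpha\wedge 1)$ were strictly negative, this would tend to $0$ as $n\to\infty$, contradicting the lower bound $1$; the logarithmic factor present at $\alpha = 1$ does not affect this conclusion. Hence $d\frac{1-\theta}{1+\theta} \geq \alpha\wedge 1$, and multiplying through by $1+\theta$ and rearranging gives $\theta\left(d+(\alpha\wedge 1)\right) \leq d-(\alpha\wedge 1)$, i.e. $\theta \leq \frac{d-(\alpha\wedge 1)}{d+(\alpha\wedge 1)}$ (note that $d-(\alpha\wedge 1)>0$ under the stated hypotheses, where $\alpha<1$ when $d=1$), so that $\delta = \frac{1}{\theta} \geq \frac{d+(\alpha\wedge 1)}{d-(\alpha\wedge 1)}$.

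The proposition is essentially a short assembly of results already established, so there is no deep obstacle within its proof itself; the two points requiring care are verifying that the hypothesis coincides verbatim with \eqref{eq:theta decay} at criticality so that \Cref{lem:cluster in box moment bound} applies, and the borderline case $\alpha = 1$, where the extra logarithmic factor in $f$ must be argued away (which, as noted, is immediate from the asymptotics). The genuine work sits upstream, in the universal-tightness-based moment bound \Cref{lem:cluster in box moment bound} and the averaged isoperimetric estimate \Cref{lem:cluster outgoing}.
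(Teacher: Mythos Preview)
Your proof is correct and follows essentially the same approach as the paper: apply \Cref{lem:cluster in box moment bound} to bound $\Ec[|K_0(\Lambda_n)|]$ from above by $C' n^{d(1-\theta)/(1+\theta)}$, combine with the lower bound $\Ec[|K_0(\Lambda_n)|] \geq C_3^{-1} f(n,\alpha)^{-1}$ coming from \Cref{lem:cluster outgoing} and \eqref{eq:duminilcopin tassion}, and compare exponents. The only cosmetic difference is that the paper packages the lower bound as inequality \eqref{eq:inequality} from the proof of \Cref{propo:two point decay}, whereas you re-derive it directly; your treatment of the borderline $\alpha=1$ logarithm and the final algebra are also handled the same way.
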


\begin{proof}
	We write $\theta = \frac{1}{\delta}$ and get that $\sum_{k=1}^{N} \pc \left( |K_0| \geq k \right) \leq C N^{1-\theta}$ for all $N \in \N$. \Cref{lem:cluster in box moment bound} shows that for some constant $C^\prime <\infty$ we have $\E_\beta \left[\left|  K_0\left(\Lambda_{n}\right) \right|\right] \leq C^\prime n^{d \frac{1-\theta}{1+\theta}}$. Combining this with inequality \eqref{eq:inequality} we get that
	\begin{align*}
	C^\prime n^{d \frac{1-\theta}{1+\theta}}
	\geq
	\E_\beta \left[\left|  K_0\left(\Lambda_{n}\right) \right|\right] \geq C_3^{-1} f(n,\alpha)^{-1} \approx n^{(\alpha \wedge 1) + o(1)}
	\end{align*} 
	and this shows that $d\frac{1-\theta}{1+\theta}\geq \alpha \wedge 1$.
	As we consider $\alpha \in (0,1)$ only for $d=1$, we always have that $\frac{\alpha \wedge 1}{d}<1$.
	Elementary calculations show that
	\begin{align*}
	& \ d \ \frac{1-\theta}{1+\theta} = d \ \frac{\delta-1}{\delta +1} \geq \alpha\wedge 1  
	\Leftrightarrow \delta-1 \geq \frac{\alpha \wedge 1}{d} \delta + \frac{\alpha \wedge 1}{d}\\
	\Leftrightarrow & \
	\delta - \frac{\alpha \wedge 1}{d}\delta = \delta \left(1- \frac{\alpha \wedge 1}{d}\right) \geq \frac{\alpha \wedge 1}{d} + 1\\
	\Leftrightarrow & \
	\delta \geq \frac{1+\frac{\alpha \wedge 1}{d}}{1-\frac{\alpha \wedge 1}{d}} = \frac{d+(\alpha \wedge 1)}{d-(\alpha \wedge 1)}
	\end{align*}
	which finishes the proof.
\end{proof}

\end{document}